\definecolor {processblue}{cmyk}{0,0,0,1}
\newtheorem{theorem}{Theorem}[section]
\newtheorem{lemma}[theorem]{Lemma}
\newtheorem{corollary}[theorem]{Corollary}
\newtheorem{conjecture}[theorem]{Conjecture}
\DeclareMathOperator{\lcm}{lcm}
\begin{document} 
\title{Short cycles in digraphs and the Caccetta--H\"{a}ggkvist conjecture}
\author{Muhammad A. Khan\\
\small{Department of Mathematics \& Statistics, University of Calgary}\\ 
\small{2500 University Drive NW, Calgary AB, Canada, T2N 1N4.}\\
\small{\texttt{muhammkh@ucalgary.ca}}}

\date{}
\maketitle

\begin{abstract}
In the theory of digraphs, the study of cycles is a subject of great importance and has given birth to a number of deep questions such as the Behzad--Chartrand--Wall conjecture (1970) and its generalization, the Caccetta--H\"{a}ggkvist conjecture (1978). Despite a lot of interest and efforts, the progress on these remains slow and mostly restricted to the solution of some special cases. In this note, we prove these conjectures for digraphs with girth is at least as large as their minimum out-degree and without short even cycles. More generally, we prove that if a digraph has sufficiently large girth and does not contain closed walks of certain lengths, then the conjectures hold. The proof makes use of some of the known results on the Caccetta--H\"{a}ggkvist conjecture, properties of direct products of digraphs and a construction that multiplies the girth of a digraph.

\vspace{3mm}  

\noindent \textit{Keywords and phrases: }Cycles in digraphs, shortest cycles, directed girth, minimum out-degree, Behzad--Chartrand--Wall conjecture, Caccetta--H\"{a}ggkvist conjecture, direct product of digraphs.  

\vspace{3mm} 

\noindent \textit{2010 MSC: }05C20, 05C38, 05C76.  
\end{abstract}


\section{Introduction}\label{sec:intro}


Let $D$ be a digraph with set of vertices $V(D)$, set of arcs $A(D)$ and order $\left|V(D)\right|$. We write $g(D)$ for the (directed) girth and $\delta^+ (D)$ for the minimum out-degree of a vertex in $D$. Mostly, we simplify matters by representing the order of a digraph by $n$, its girth by $g$ and its minimum out-degree by $k$. A digraph is said to be {\it simple} if it contains no parallel arcs directed from one vertex to another. An {\it oriented graph} is a simple digraph without any pair of symmetric arcs. All digraphs considered in this paper are simple but not necessarily oriented (unless explicitly stated). Therefore, we can represent an arc directed from a vertex $u$ to a vertex $v$ of a digraph $D$ by $uv$. Moreover, here the terms `cycle' and `girth' always refer to a directed cycle and the directed girth, respectively. 

The investigation of the cycle structure of digraphs mostly focuses on particular kinds of cycles and systems of cycles in digraphs. The study of shortest cycles forms an important part of this investigation.  To this end, it is natural to look for good upper bounds on the girth of digraphs. A digraph is said to be {\it $d$-regular} (or {\it $d$-biregular}) if every vertex has in-degree and out-degree $d$. A $d$-regular digraph of girth $g$ and of least order is called a {\it directed $(d,g)$-cage}. In 1970, Behzad, Chartrand, and Wall \cite{BCW} conjectured that the order of a directed $(d,g)$-cage is $d(g-1)+1$. Equivalently, we have \cite{S2}: 

\begin{conjecture}[Behzad--Chartrand--Wall conjecture]\label{bcw}
Let $G$ be a $k$-regular digraph of order $n$ and girth $g$. Then $g\le \lceil n/k\rceil$.  
\end{conjecture}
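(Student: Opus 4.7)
The plan is to tackle the equivalent form: every $k$-regular digraph of order $n$ and girth $g$ satisfies $n \ge k(g-1)+1$. Since the Behzad--Chartrand--Wall conjecture is the restriction of the Caccetta--H\"{a}ggkvist conjecture to regular digraphs, my first move would be to try to derive it from known partial results on CH. This handles certain ranges of $(k,g)$ but leaves the statement open in general.

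For a direct combinatorial attack, I would fix a shortest cycle $C = v_0 v_1 \cdots v_{g-1}$ and, for each $v_i$, let $S_i$ denote the set of $k-1$ out-neighbors of $v_i$ other than $v_{i+1}$. The girth condition forces $S_i \cap V(C) = \emptyset$, since any arc $v_i v_j$ with $v_j \in V(C)\setminus\{v_{i+1}\}$ can be closed by following $C$ forward from $v_j$ back to $v_i$, producing a cycle of length at most $g-1$. The bound $n \ge g + (g-1)(k-1) = k(g-1)+1$ would then follow if one could show that the total overlap among the $S_i$ is at most $k-1$. One checks that in the extremal circulant digraph $C_n(1,\ldots,k)$ with $n=k(g-1)+1$, indeed $S_0 = S_{g-1}$ and all other pairs are disjoint, accounting for exactly this deficit -- so any tight argument must pin down this overlap structure.

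The main obstacle is precisely controlling this overlap: a vertex outside $C$ may serve as a common out-neighbor of several $v_i$ without creating any short directed cycle, so girth alone is insufficient. This is why BCW remains open in general. The workaround I would pursue, and which the abstract indicates the paper adopts, is to impose the additional hypothesis that the digraph has no short even closed walks and to exploit the direct product with an auxiliary digraph $H$ in such a way that $g(G \times H) = g(G)\cdot g(H)$, multiplying the girth while preserving out-regularity. After this blow-up one lands in a regime of $(k,g)$ where CH is already settled, which then yields BCW for the original digraph. I expect the technical heart to lie in (i) choosing $H$ so that the direct product actually multiplies the girth under the no-short-even-closed-walk assumption, and (ii) verifying that the resulting reduction hits a known case of CH rather than another open one.
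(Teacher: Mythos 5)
This statement is a conjecture; the paper does not prove it in general, and neither do you --- your proposal correctly recognizes that the direct combinatorial attack stalls on controlling the overlaps among the out-neighborhoods $S_i$, which is exactly why BCW remains open. Your reconstruction of what the paper actually does for its special cases matches the paper's approach: the auxiliary digraph is $C_2$ (more generally $C_p$), the girth-multiplication $g(C_2\times D)=2g(D)$ is established via the Kronecker product of adjacency matrices together with the decomposition of closed walks into cycles (this is where the ``no even cycle shorter than $2g$'' hypothesis enters), and the known case of CH that the blow-up lands in is Shen's bound $g\le\max\left\{\lceil n/k\rceil,\,2k-2\right\}$, where the hypothesis $g\ge k$ guarantees $g(D^\times)=2g\ge 2k>2k-2$ so that the ceiling term must dominate. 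So your outline is a faithful account of the paper's partial result, not a proof of the conjecture itself --- which is the correct state of affairs.
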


The above conjecture was proved for $k=2$ by Behzad \cite{Beh}, for $k=3$ by Bermond \cite{Ber}, and for vertex-transitive digraphs by Hamidoune \cite{H1}. In 1978, Caccetta and H\"{a}ggkvist \cite{CH} proposed to replace $k$-regularity in the statement of Conjecture \ref{bcw} by specifying the minimum out-degree in the digraph instead. Thus the Caccetta--H\"{a}ggkvist conjecture was born, which is currently one of the central open questions in graph theory. 

\begin{conjecture}[Caccetta--H\"{a}ggkvist conjecture]\label{CH conjecture}
Every digraph of order $ n $ with minimum out-degree at least $ k $ has a cycle with length at most $ \lceil n/k\rceil $. 
\end{conjecture}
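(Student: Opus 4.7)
The plan is to attempt a contradiction argument. Assume $D$ has order $n$, minimum out-degree $k$, and girth $g \geq \lceil n/k \rceil + 1$. Fix a vertex $v$ and build the out-arborescence by breadth-first search, writing $N^+_i(v)$ for the set of vertices at directed distance exactly $i$ from $v$. The girth hypothesis ensures that no directed path of length at most $g-1$ from $v$ can revisit a vertex, so each such level is nonempty and each vertex in $N^+_i(v)$ has at least $k$ out-neighbors in $N^+_i(v) \cup N^+_{i+1}(v)$. The hope is to show $\sum_{i=0}^{g-1} |N^+_i(v)| \geq n+1$, contradicting the fact that all these vertices live in $V(D)$.

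The first milestone is to upgrade ``no repeated vertex along a single path'' to ``the levels $N^+_0(v), \ldots, N^+_{g-1}(v)$ are pairwise disjoint.'' If $u \in N^+_i(v) \cap N^+_j(v)$ with $i < j$, then concatenating a path of length $i$ from $v$ to $u$ with a path of length $j$ from $v$ to $u$ produces a closed walk whose cyclic decomposition must contain a cycle of length $< g$ unless the two paths interact in very controlled ways. A careful analysis of this interaction, perhaps in conjunction with the lower bound $|N^+_{i+1}(v)| \geq k|N^+_i(v)| - (\text{overlap corrections})$, is where I would focus effort. Together with the minimum out-degree, one would then try to conclude $|N^+_i(v)| \geq k$ for $1 \leq i \leq g-1$, giving $n \geq 1 + k(g-1) > n$, the desired contradiction.

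An alternative route I would pursue in parallel is spectral: the Perron--Frobenius eigenvalue $\rho$ of the adjacency matrix satisfies $\rho \geq k$ by minimum out-degree, so $\mathrm{tr}(A^\ell) \geq k^\ell$ for each $\ell$, while every closed walk of length $\ell < g$ must be trivial (decompose into vertex-cycles using loops — which are absent — or cycles of length $\geq g$). Comparing these two estimates for $\ell$ in the range $[n/k,\, g-1]$ should force $g \leq n/k$. Reduction to the $d$-regular Behzad--Chartrand--Wall case via a regularization construction (deleting excess out-arcs of high-degree vertices while preserving girth) could also be attempted as a sanity check.

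The hard part — and the reason Conjecture \ref{CH conjecture} has resisted proof since 1978 — is precisely the ``overlap correction'' in the BFS argument: a vertex can legitimately appear in multiple levels $N^+_i(v)$ without producing any cycle shorter than $g$, because the two witnessing paths may share initial segments in exactly the right way. Every known approach, from the counting methods of Caccetta--H\"aggkvist and Chv\'atal--Szemer\'edi to the fractional/probabilistic refinements of Shen, Hamburger--Haxell--Kostochka, and Hladk\'y--Kr\'al'--Norin, manages only to prove $g \leq c \cdot n/k$ for some $c > 1$ (or to settle small cases $k \leq 3$ and the triangle case $g=3$ up to constants). I therefore expect the plan above to recover at best a weakened form of the bound; reaching $c=1$ appears to require either a structural classification of extremal digraphs or a new analytic input, and this is why the present paper confines itself to the setting of large girth and forbidden short even closed walks rather than the full conjecture.
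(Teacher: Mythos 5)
The statement you were asked about is Conjecture~\ref{CH conjecture} itself, which remains open; the paper does not prove it, and neither does your proposal --- as you yourself acknowledge in your final paragraph. So the honest verdict is that there is no proof to compare: the paper only establishes special cases (Theorem~\ref{ch1} and Corollary~\ref{ch2}), and it does so by an entirely different mechanism from anything in your sketch, namely by forming the direct product $C_2\times D$, using the Kronecker-product identity $M_{C_2\times D}=M_{C_2}\otimes M_D$ to show that the product has girth $2g$ when $D$ has no short even closed walks, and then invoking Shen's bound $g\le\max\{\lceil n/k\rceil,\,2k-2\}$ on the product. Your proposal never reduces to a previously known bound in this way; it attacks the conjecture head-on.

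Two concrete errors in your sketch are worth flagging beyond the admitted incompleteness. First, in the BFS argument, the levels $N^+_i(v)$ (vertices at directed distance \emph{exactly} $i$) are pairwise disjoint by definition, so the ``first milestone'' is vacuous; the genuine obstruction is different, namely that a vertex in $N^+_i(v)$ may send out-arcs back to $N^+_j(v)$ for any $1\le j\le i$ without creating a cycle of length less than $g$ (only arcs returning to $v$ itself do that), so the claim that each vertex of $N^+_i(v)$ has $k$ out-neighbours in $N^+_i(v)\cup N^+_{i+1}(v)$ is false, and the recursion $|N^+_{i+1}|\ge k|N^+_i|-(\text{corrections})$ cannot be closed. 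Second, the spectral claim $\mathrm{tr}(A^\ell)\ge k^\ell$ is false: for the directed $n$-cycle one has $k=1$ yet $\mathrm{tr}(A^\ell)=0$ for all $\ell<n$, because the non-Perron eigenvalues cancel the Perron contribution in the trace. The correct use of $\mathrm{tr}(A^\ell)=0$ for $\ell<g$ (via Newton's identities on the characteristic polynomial) is exactly what yields the known bounds of the form $g\le c\,n/k$ with $c>1$, and no more. Your closing assessment --- that these routes recover only weakened constants and that the full conjecture requires new input --- is accurate, and it is the reason the paper restricts itself to digraphs with large girth and forbidden short even cycles or closed walks.
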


Only special cases of Conjecture \ref{CH conjecture} have been resolved. The case $k=2$ was solved in the original paper of Caccetta and H\"{a}ggkvist \cite{CH}. Later the conjecture was proved for $k=3$ by Hamidoune \cite{H2} and $k=4,5$ by Ho\`{a}ng and Reed \cite{HR}. Moreover, Shen \cite{S2} showed that the conjecture holds for all $k\leq \sqrt{n/2}$. Among large $k$, the case corresponding to $k=n/2$ trivially holds. However, the case $k=n/3$, or more generally $n/3\leq k< n/2$, is already open, highly interesting and, for digraphs with minimum in-degree and out-degree both at least $k$, is implied by Seymour's second neighbourhood conjecture \cite{Sul}. Some results on this case appear in \cite{Lich, Raz}. Another problem related to this case asks to determine the smallest $\alpha>0$ such that any digraph of order $n$ and minimum out-degree at least $\alpha n$ contains a cycle of length at most 3. If the Caccetta--H\"{a}ggkvist conjecture holds then $\alpha = 1/3$. However, this is still open. The upper bound on $\alpha$ was progressively improved in \cite{CH, Bo, S1, HHK}, in chronological order, to the current best of $\alpha \le 0.3465$ in \cite{Hal}.


Since cycles of length $\lceil n/k \rceil$ or less proved elusive, Chv\'{a}tal and Szemer\'{e}di \cite{CS} proposed to look for cycles of length at most $\frac{n}{k} + c$, for small $c$. They proved the existence of such cycles for $c = 2500$. This was improved to $c=304$ by Nishimura \cite{Nish} and to $c=73$ by Shen \cite{S3}. For small $n$, the best upper bound of  
$$g\le 3\left\lceil \frac{n}{k}\ \ln\left(\frac{2+\sqrt{7}}{3} \right)\right\rceil \approx 1.312 \ \frac{n}{k}$$
is due to Shen \cite{S3} that improves the earlier result $g\le 2n/(k+1)$ by Chv\'{a}tal and Szemer\'{e}di \cite{CS}. Shen \cite{S2} also proved the following result, which we use in the sequel and which implies that any counterexample to Conjecture \ref{CH conjecture} satisfies $n\le 2k^2 -3k$. 

\begin{theorem}\label{summary} 
Let $D$ be a digraph with $\left|V(D)\right|=n$, $\delta^{+}(D)=k$ and $g(D)=g$. Then 
\begin{equation}\label{eq:best1}
g\le \max\left\{\left\lceil\frac{n}{k}\right\rceil, 2k-2\right\}.
\end{equation}
\end{theorem}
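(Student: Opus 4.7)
The plan is to argue by contradiction, with an auxiliary induction on $n$ (base cases for small $k$ are covered by the already-resolved cases of Conjecture \ref{CH conjecture} cited in the Introduction). Suppose $g > \max\{\lceil n/k \rceil, 2k-2\}$, so that $g \ge 2k-1$ and $n < kg$ hold simultaneously. The goal is to force $n \ge kg$ by a vertex count.

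First I would fix a shortest directed cycle $C = v_0 v_1 \cdots v_{g-1} v_0$ and, for each $i$ taken modulo $g$, set $S_i = N^+(v_i) \setminus \{v_{i+1}\}$; the minimum out-degree hypothesis gives $|S_i| \ge k-1$. A one-line chord argument shows $S_i \cap V(C) = \emptyset$: any arc $v_i \to v_j$ with $j \not\equiv i+1 \pmod g$ combines with the cycle segment from $v_j$ back to $v_i$ to close a directed cycle of length $1 + ((i-j) \bmod g) < g$, contradicting minimality of $C$. Hence $\bigcup_i S_i \subseteq V(D) \setminus V(C)$, and since $\sum_i |S_i| \ge g(k-1)$, the bound $n \ge kg$ would follow from pairwise disjointness of the $S_i$.

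The technical core is to establish this disjointness. For $u \in V(D) \setminus V(C)$ put $I(u) = \{i : v_i \to u\}$ and suppose for contradiction that distinct $i, j \in I(u)$ exist. If the set $R(u)$ of vertices out-reachable from $u$ avoids $V(C)$, then the induced subdigraph $D[R(u)]$ has minimum out-degree $\ge k$ and girth $\ge g \ge 2k-1$, so by the inductive hypothesis $|R(u)| \ge k(g-1)+1$; but $R(u) \subseteq V(D) \setminus V(C)$ also gives $|R(u)| \le n-g < (k-1)g$, which forces $g < k-1$ and contradicts $g \ge 2k-1$. Thus some out-walk from $u$ first returns to $C$ at some $v_\ell$ after $t$ steps, and the resulting closed walks $v_i \to u \to \cdots \to v_\ell \to \cdots \to v_i$ and $v_j \to u \to \cdots \to v_\ell \to \cdots \to v_j$ are directed cycles of lengths $(t+1) + ((i-\ell) \bmod g)$ and $(t+1) + ((j-\ell) \bmod g)$, each at least $g$. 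Applying the chord argument recursively at $u$ and at the subsequent walk vertices (each with $\ge k-1$ out-neighbors off $C$), and leveraging the slack afforded by $g \ge 2k-1$, I would show that no admissible such walk exists, yielding $|I(u)| \le 1$ and hence $n \ge g + g(k-1) = kg$, the required contradiction.

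The main obstacle will be the recursive walk argument in the third paragraph: the bookkeeping of which cycle positions remain admissible as first-return points after each successive step, and showing that the girth threshold $2k-1$ is precisely what is needed to exhaust these options within fewer than $g/2$ steps. I expect the worst case to match the extremal directed $(k,g)$-cage, where the $S_i$'s are genuinely pairwise disjoint and $n = k(g-1)+1$ is attained on the nose, leaving only a small gap of $k-1$ to the stronger bound $kg$ aimed at in the contradiction.
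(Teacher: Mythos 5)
First, a framing point: the paper does not prove Theorem~\ref{summary} at all --- it is quoted from Shen \cite{S2} --- so there is no in-paper argument to compare yours against. Judged on its own terms, your proposal has a genuine gap at its core. The setup is fine: the chord argument showing $N^+(v_i)\cap V(C)=\{v_{i+1}\}$, and the reachability/induction step showing every vertex outside $C$ has a walk back to $C$, are both correct. But the step that actually carries the whole proof --- showing $|I(u)|\le 1$ for every $u\notin V(C)$, i.e.\ that the sets $S_i$ are pairwise disjoint --- is never argued. The ``recursive walk argument'' is a declaration of intent (``I would show that no admissible such walk exists''), which you yourself flag as the main obstacle. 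The two inequalities you do extract, $t+1+((i-\ell)\bmod g)\ge g$ and $t+1+((j-\ell)\bmod g)\ge g$, only bound $t$ from below and produce no contradiction on their own.

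More importantly, there is concrete evidence that this step cannot be closed by the local arguments you describe. Pairwise disjointness of the $S_i$, together with $S_i\cap V(C)=\emptyset$, would give $n\ge kg$, whereas the theorem only requires $n\ge k(g-1)+1$ when $g\ge 2k-1$, and the conjectured extremal digraphs attain $k(g-1)+1$ exactly. In those digraphs the $S_i$ are \emph{not} pairwise disjoint, contrary to your closing expectation: take the circulant on $\mathbb{Z}_5$ with arcs $i\to i+1$ and $i\to i+2$, so $k=2$, $g=3=2k-1$, $n=5=k(g-1)+1$. With shortest cycle $0\to 2\to 4\to 0$ one finds $S_{v_0}=N^+(0)\setminus\{2\}=\{1\}$ and $S_{v_2}=N^+(4)\setminus\{0\}=\{1\}$, so the outside vertex $1$ has two in-neighbours on $C$. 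Every local structure your argument relies on (chords, first-return walks, the $\ge k-1$ out-neighbours off $C$) is present verbatim in this digraph, which sits only one vertex above your contradiction regime $n\le k(g-1)$; hence disjointness cannot follow from those structures alone, and any successful proof must exploit the global count $n\le k(g-1)$ in an essential way that your proposal does not supply. Shen's actual argument in \cite{S2} is a more delicate counting over the cycle-in-neighbourhoods of outside vertices rather than a disjointness claim, and the slack of exactly $k-1$ vertices that you note at the end is precisely the reason your stronger target $n\ge kg$ is the wrong thing to aim for.
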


In January 2006, Chudnovsky, Seymour and Thomas organized a workshop at the American Institute of Mathematics to investigate the Caccetta--H\"{a}ggkvist conjecture and its relatives. Sullivan \cite{Sul} wrote an extensive survey of the different forms of the conjecture and related problems discussed at the workshop. However, no major progress has been made since Shen's papers \cite{S2, S3}.   

Here we prove the Caccetta--H\"{a}ggkvist conjecture (and hence the Behzad--Chartrand--Wall conjecture) for any digraph $D$ with odd girth $g(D)\ge \delta^+ (D)$ and not containing even cycles of length less than $2g(D)$. The proof relies on some properties of direct product of digraphs and a construction that doubles the girth of a digraph with odd girth. The direct product of digraphs and other ingredients of our approach are discussed in the next section, while the proof appears in section \ref{sec:ch}. The same method shows that the Caccetta--H\"{a}ggkvist conjecture is satisfied by any digraph $D$ of girth $g(D)\ge 2\delta^+ (D)/p$ not containing closed walks of lengths $p, 2p, \ldots, (g(D)-1)p$, for some positive integer $p\ge 2$. This is a draft version of the paper and will be expanded soon.

\section{Walks, adjacency and direct products}\label{sec:direct}
Recall that all digraphs considered in this paper are simple. A {\it walk of length $\ell$} in a digraph $D$ is a sequence $v_0 , v_1 , \ldots, v_\ell$ of not necessarily distinct vertices of $D$ so that for each $i=1,\ldots, \ell$, $v_{i-1}v_i \in A(D)$. If $v_0 = v_\ell$, we have a {\it closed walk} \cite{Bang}. Moreover, a closed walk on distinct vertices is a cycle. Clearly, a walk can have repeated arcs that form a multiset, which we call the {\it arc multiset} of the walk. We need the following fundamental result concerning closed walks in a digraph. A slightly weaker form of Lemma \ref{decomp} appears in \cite[Exercise 1.12]{Bang}, whereas the form we use appears in \cite{Trim}.    

\begin{lemma}\label{decomp}
The arc multiset of a closed walk in a digraph decomposes into arc sets of (not necessarily distinct) cycles. Thus the length of a closed walk in a digraph equals the sum of lengths of the (not necessarily distinct) cycles it traverses.  
\end{lemma}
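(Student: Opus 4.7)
The plan is to prove Lemma \ref{decomp} by strong induction on the length $\ell$ of the closed walk $W = v_0, v_1, \ldots, v_\ell$ (with $v_0 = v_\ell$). The base case, absorbed into the induction, is when $W$ is itself a cycle, i.e.\ when $v_0, v_1, \ldots, v_{\ell-1}$ are pairwise distinct; the arc multiset of $W$ is then already a single cycle's arc set and there is nothing further to prove.

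Otherwise, some vertex repeats in $W$, and I would extract the first such repetition in order to peel off a cycle. Let $j$ be the smallest index in $\{1, 2, \ldots, \ell\}$ for which there exists $i < j$ with $v_i = v_j$. By minimality of $j$, the vertices $v_i, v_{i+1}, \ldots, v_{j-1}$ are pairwise distinct, so $C := v_i, v_{i+1}, \ldots, v_j$ is a cycle of length $j-i$. Splicing $C$ out of $W$ leaves the shorter sequence $W' := v_0, v_1, \ldots, v_i, v_{j+1}, \ldots, v_\ell$. Because $v_i = v_j$, the consecutive pair $v_i, v_{j+1}$ of $W'$ corresponds to the arc $v_j v_{j+1}$ of the original walk, so $W'$ is still a closed walk, now of length $\ell - (j-i) < \ell$.

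The arc multiset of $W$ is then the disjoint multiset union of the arc set of $C$ and the arc multiset of $W'$. Applying the induction hypothesis to $W'$ produces a decomposition of its arc multiset into arc sets of cycles; adjoining $C$ yields the desired decomposition for $W$, and summing the lengths over the resulting cycles gives the length identity in the second sentence of the lemma.

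The argument is essentially routine, so there is no serious obstacle. The two points requiring attention are the verification that the spliced walk $W'$ remains a walk at the join $v_i \to v_{j+1}$ (which is immediate from $v_i = v_j$) and that all bookkeeping is done at the level of multisets rather than sets, since $W$ may traverse the same arc more than once and the decomposition must account for each traversal separately.
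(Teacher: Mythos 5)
Your argument is correct and is the standard ``peel off the first cycle'' strong induction; note that the paper does not actually prove Lemma \ref{decomp} but cites it from \cite[Exercise 1.12]{Bang} and \cite{Trim}, where essentially this same decomposition argument appears. Your handling of the two delicate points --- that the splice $v_i \to v_{j+1}$ is a genuine arc because $v_i = v_j$, and that the bookkeeping must be done with multisets since arcs may repeat --- is exactly what is needed, and the case split guaranteeing $j \le \ell-1$ (so that $W'$ stays closed) is correctly absorbed into your base case.
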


The {\it adjacency matrix} $M_D$ of a digraph $D$ of order $n$ is an $n\times n$ matrix whose $ij$-entry, $(M_D )_{ij}$, is $1$ if $v_i v_j \in A(D)$ and $0$ otherwise \cite{Bang}. The matrix $M_D$ can be used to compute the number of closed walks of length $\ell$ containing a given vertex of $D$ in the following way. 

\begin{lemma}\label{adj}
Let $D$ be a digraph with vertex set $V(D)=\{v_1 , \ldots, v_n \}$ and corresponding adjacency matrix $M_D$. Then $(M_D ^\ell )_{ii}$, the $ii$-entry of the matrix power $M_D^\ell$, equals the number of closed walks of length $\ell$ containing the vertex $v_i \in V(D)$. 
\end{lemma}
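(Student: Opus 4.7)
The plan is to establish a slightly more general statement by induction on $\ell$, then specialize. Specifically, I will prove that for all $\ell \ge 1$ and all $i,j \in \{1,\ldots,n\}$, the entry $(M_D^\ell)_{ij}$ equals the number of walks of length $\ell$ in $D$ from $v_i$ to $v_j$. Setting $j = i$ then yields the lemma, since a closed walk of length $\ell$ containing $v_i$ (taken as its start and end vertex) is exactly a walk of length $\ell$ from $v_i$ to itself.

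For the base case $\ell = 1$, the definition of the adjacency matrix directly gives $(M_D)_{ij} = 1$ if $v_iv_j \in A(D)$ and $0$ otherwise, which is precisely the number of walks of length $1$ from $v_i$ to $v_j$ (since $D$ is simple, there is at most one such arc).

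For the inductive step, assume the claim holds for $\ell - 1$. By the definition of matrix multiplication,
\begin{equation*}
(M_D^\ell)_{ij} = \sum_{k=1}^{n} (M_D^{\ell-1})_{ik}\,(M_D)_{kj}.
\end{equation*}
By the inductive hypothesis, $(M_D^{\ell-1})_{ik}$ counts walks of length $\ell-1$ from $v_i$ to $v_k$, and $(M_D)_{kj}$ is the indicator that $v_kv_j$ is an arc. Partitioning the walks of length $\ell$ from $v_i$ to $v_j$ according to their second-to-last vertex $v_k$, and observing that each such walk is uniquely obtained by appending the arc $v_kv_j$ to a walk of length $\ell-1$ from $v_i$ to $v_k$, shows that the sum above counts exactly the walks of length $\ell$ from $v_i$ to $v_j$. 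Taking $j = i$ then gives the statement of the lemma.

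There is essentially no obstacle here: the result is a standard consequence of the definitions of matrix multiplication and of the adjacency matrix. The only minor care needed is to be explicit that ``contains the vertex $v_i$'' in the lemma statement is interpreted as ``starts and ends at $v_i$'', which is the natural reading since every closed walk is counted once for each of its vertex occurrences as basepoint, and the indexing by $i$ on the diagonal of $M_D^\ell$ is precisely this basepoint indexing.
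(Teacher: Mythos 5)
Your proof is correct and is the standard induction on $\ell$ establishing that $(M_D^\ell)_{ij}$ counts walks of length $\ell$ from $v_i$ to $v_j$; the paper states this lemma as a known fact without proof, so there is no alternative argument to compare against. Your remark on interpreting ``containing $v_i$'' as ``based at $v_i$'' is the right reading, and it is all that is needed where the lemma is applied, since the paper only uses the positivity of some diagonal entry to detect the existence of a closed walk of length $\ell$.
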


A (digraph) {\it product} is a binary operation defined on the class of all digraphs such that given digraphs $D_1$ and $D_2$, their product is a digraph with vertex set $V(D_1 )\times V(D_2 )$ -- the Cartesian product of vertex sets of $D_1$ and $D_2$ -- and whose arcs are defined according to some condition depending on the arcs of $D_1$ and $D_2$. Some of the popular product operations include Cartesian product, direct product, strong product and lexicographic product (see \cite[Chapter 32]{Hand} for details). Here we are interested in the direct product. 

The direct product of graphs was explicitly defined in \cite{We} and extended to digraphs in a natural way in \cite{Mc}. We denote the direct product of digraphs $D_1$ and $D_2$ by $D_1 \times D_2$ and define it to be a digraph with vertex set $V(D_1 ) \times V(D_2 )$ such that an arc is directed from a vertex $(u,v)$ to a vertex $(x,y)$ if and only if $ux \in A(D_1 )$ and $vy \in A(D_2 )$. This product operation, also known as the tensor product, Kronecker product or categorical product, has the distinction of being the category-theoretic product arising in the category of digraphs and homomorphisms \cite{Hand}. In addition, it has a natural connection with the Kronecker product of matrices that we describe a bit later. 

Given a $p\times q$ matrix $A$ and an $r\times s$ matrix $B$, the {\it Kronecker product} $A\otimes B$ is the $pr\times qs$ block matrix 
\[A\otimes B = \begin{bmatrix} a_{11}B & \cdots & a_{1q}B \\
a_{21}B & \cdots & a_{2q}B \\
\vdots & \ddots & \vdots \\
a_{p1}B & \cdots & a_{pq}B \\
\end{bmatrix}.
\]

The following lemma (see, for example, Lemma 4.2.10 in \cite{Horn}) relates Kronecker product with the usual matrix product. 

\begin{lemma}\label{mixed}
If $A$, $B$, $C$ and $D$ be matrices such that the products $AC$ and $BD$ are defined, then 
\[(A\otimes B)(C\otimes D) = AC\otimes BD. 
\]
\end{lemma}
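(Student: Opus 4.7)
The plan is to verify the identity by direct block-matrix computation, exploiting the fact that a Kronecker product naturally presents itself as a block matrix whose blocks are scalar multiples of the right-hand factor. In this way the multiplication of two Kronecker products reduces to ordinary matrix multiplication carried out blockwise, and the scalar entries $a_{ij}$ and $c_{ij}$ combine into exactly the entries of the product $AC$.

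Concretely, I would first fix compatible dimensions so that both $AC$ and $BD$ are defined: say $A$ is $p\times q$, $B$ is $r\times s$, $C$ is $q\times t$, and $D$ is $s\times u$. Then $A\otimes B$ is a $p\times q$ array of $r\times s$ blocks whose $(i,k)$ block is $a_{ik}B$, and $C\otimes D$ is a $q\times t$ array of $s\times u$ blocks whose $(k,j)$ block is $c_{kj}D$. These two partitions are compatible for block multiplication: the inner block dimension (the width $s$ of the blocks of $A\otimes B$) matches the outer block dimension (the $q$ block-rows of $C\otimes D$) in exactly the right way.

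The key step is then to compute the $(i,j)$ block of $(A\otimes B)(C\otimes D)$ using block multiplication:
\[
\sum_{k=1}^{q}(a_{ik}B)(c_{kj}D) \;=\; \left(\sum_{k=1}^{q}a_{ik}c_{kj}\right)BD \;=\; (AC)_{ij}\,(BD),
\]
which is precisely the $(i,j)$ block of $AC\otimes BD$. Equality in every block yields the lemma.

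The only thing requiring care is the justification that blockwise multiplication really produces the usual matrix product under the chosen partition. This reduces to a routine reindexing of the defining sum $\sum_{m}(A\otimes B)_{im}(C\otimes D)_{mj}$ by writing $m = (k-1)s + \ell$ with $1\le k\le q$ and $1\le\ell\le s$, so that the $m$-sum splits as an outer sum over $k$ (the block index) and an inner sum over $\ell$ (the intra-block index, which is exactly the summation collapsed inside the product $BD$). I do not anticipate any genuine obstacle beyond bookkeeping.
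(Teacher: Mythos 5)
Your block-multiplication argument is correct and complete: the dimension bookkeeping is right, the $(i,j)$ block computation $\sum_k (a_{ik}B)(c_{kj}D) = (AC)_{ij}\,BD$ is exactly the standard proof of the mixed-product property, and you correctly flag the only subtle point (that blockwise multiplication under this partition agrees with the entrywise definition). The paper gives no proof of this lemma, citing it as Lemma 4.2.10 of Horn and Johnson, and your argument is essentially the one found there.
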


Now we can describe the relationship between direct products of digraphs and Kronecker products of matrices. 

\begin{lemma}\label{product}
For any digraphs $D_1$ and $D_2$, 
\[M_{D_1 \times D_2 } = M_{D_1 } \otimes M_{D_2 }. 
\] 
\end{lemma}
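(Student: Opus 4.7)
The plan is a direct verification, entry by entry, after fixing a compatible ordering of the vertices. Write $V(D_1)=\{u_1,\ldots,u_p\}$ and $V(D_2)=\{v_1,\ldots,v_q\}$, so $M_{D_1}$ is $p\times p$ and $M_{D_2}$ is $q\times q$, and the Kronecker product $M_{D_1}\otimes M_{D_2}$ is a $pq\times pq$ block matrix whose $(i,j)$-block equals $(M_{D_1})_{ij}\,M_{D_2}$. To match this block structure, I would order the vertices of $D_1\times D_2$ lexicographically as $(u_1,v_1),(u_1,v_2),\ldots,(u_1,v_q),(u_2,v_1),\ldots,(u_p,v_q)$; this is just a labelling choice and does not affect the adjacency matrix up to simultaneous row/column permutation, which is the standard convention when equating $M_{D_1\times D_2}$ with a Kronecker product.

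With this ordering, the entry of $M_{D_1\times D_2}$ in the row indexed by $(u_i,v_k)$ and column indexed by $(u_j,v_l)$ is, by definition of the adjacency matrix, equal to $1$ if $(u_i,v_k)(u_j,v_l)\in A(D_1\times D_2)$ and $0$ otherwise. By the definition of the direct product, this arc exists if and only if $u_iu_j\in A(D_1)$ \emph{and} $v_kv_l\in A(D_2)$, so the entry equals the product $(M_{D_1})_{ij}(M_{D_2})_{kl}$. On the other side, the corresponding entry of $M_{D_1}\otimes M_{D_2}$ lies in the $(i,j)$-block at local position $(k,l)$, which by definition of the Kronecker product is exactly $(M_{D_1})_{ij}(M_{D_2})_{kl}$. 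Since these agree in every entry, the two matrices are equal.

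There is no real obstacle here: the result is essentially a bookkeeping identity once the vertex ordering is chosen to align with the block structure. The only subtlety worth flagging explicitly is that a different ordering of $V(D_1\times D_2)$ would give a matrix that is permutation-similar to $M_{D_1}\otimes M_{D_2}$ rather than equal to it, so the equality in the statement should be understood up to the canonical lexicographic identification of $V(D_1)\times V(D_2)$ with $\{1,\ldots,pq\}$. This remark makes the statement unambiguous and lets Lemma~\ref{mixed} be applied later to compute powers of $M_{D_1\times D_2}$ via powers of $M_{D_1}$ and $M_{D_2}$, which is presumably how Lemmas~\ref{decomp}, \ref{adj} and \ref{product} will combine to control closed walks in direct products in the next section.
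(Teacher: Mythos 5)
Your proof is correct and is the standard entry-by-entry verification; the paper states this lemma without proof (treating it as a known fact about the Kronecker/categorical product), and your argument, including the remark about fixing the lexicographic ordering so that equality rather than permutation-similarity holds, is exactly the intended justification.
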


Let $\ell$ be a positive integers. We denote by $C_\ell$ the digraph consisting of a directed cycle of length $\ell$. Also for any positive integer $j$ and a digraph $D$, let $jD$ denote the digraph consisting of the union of $j$ (vertex and arc) disjoint copies of $D$. The result below appears as relation (32.1) in \cite{Hand}. 
 
\begin{lemma}\label{cycles}
Given positive integers $\ell$ and $m$, 
\[C_\ell \times C_m = \gcd(\ell, m) \ C_{\lcm(\ell, m)}, 
\]
where $\gcd(\cdot , \cdot)$ and $\lcm(\cdot , \cdot)$ denote the greatest common factor and the least common multiple operators, respectively.  
\end{lemma}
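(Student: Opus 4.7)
The plan is to argue directly from the definition of the direct product. Label the vertices of $C_\ell$ by $\mathbb{Z}/\ell\mathbb{Z}$ and those of $C_m$ by $\mathbb{Z}/m\mathbb{Z}$, with arcs $i \to i+1$ and $j \to j+1$ respectively. By the definition of the direct product, an arc goes from $(i,j)$ to $(i',j')$ in $C_\ell \times C_m$ if and only if $i'\equiv i+1 \pmod{\ell}$ and $j' \equiv j+1 \pmod{m}$. Consequently every vertex has exactly one out-neighbour and one in-neighbour, so the digraph $C_\ell \times C_m$ is a disjoint union of directed cycles.

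Next I would compute the length of the cycle through an arbitrary vertex $(i,j)$. Iterating the unique out-arc $t$ times yields $(i+t \bmod \ell,\ j+t \bmod m)$, and this equals $(i,j)$ precisely when $t$ is a common multiple of $\ell$ and $m$. The smallest such positive $t$ is $\lcm(\ell,m)$, so the cycle containing $(i,j)$ has length exactly $\lcm(\ell,m)$. Since the choice of $(i,j)$ was arbitrary, every connected component of $C_\ell \times C_m$ is a directed cycle of length $\lcm(\ell,m)$.

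Finally, a simple counting argument closes the proof: the vertex set has cardinality $\ell m$, and partitioning it into cycles of equal length $\lcm(\ell,m)$ produces exactly
\[
\frac{\ell m}{\lcm(\ell,m)} = \gcd(\ell,m)
\]
components. Hence $C_\ell \times C_m = \gcd(\ell,m)\,C_{\lcm(\ell,m)}$, as claimed.

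There is really no significant obstacle here; the only point that requires care is verifying that each vertex has in-degree and out-degree exactly one, which is what makes the components cycles rather than more complicated strongly connected subdigraphs. Everything else reduces to the elementary identity $\ell m = \gcd(\ell,m)\lcm(\ell,m)$.
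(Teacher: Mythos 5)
Your proof is correct. Note that the paper itself offers no proof of this lemma: it simply cites it as relation (32.1) of the Handbook of Product Graphs \cite{Hand}, so there is no argument in the text to compare against. Your direct verification is the standard one and is complete: identifying the vertices with $\mathbb{Z}/\ell\mathbb{Z}\times\mathbb{Z}/m\mathbb{Z}$, observing that every vertex of $C_\ell\times C_m$ has in-degree and out-degree exactly one (so the product is a disjoint union of directed cycles), computing the period of the shift $(i,j)\mapsto(i+1,j+1)$ as $\lcm(\ell,m)$, and counting components via $\ell m/\lcm(\ell,m)=\gcd(\ell,m)$. You correctly flag the one point that needs care, namely that the degree condition is what forces the components to be cycles. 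The only cosmetic remark is the degenerate case $\ell=1$ or $m=1$, where $C_1$ is a loop; your argument still goes through verbatim there, and in any case the paper only ever invokes the lemma with $\ell\ge 2$.
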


\section{Constructing digraphs of twice the girth} \label{sec:ch}
In this section, we present our main result. As mentioned earlier, the proof technique make use of Theorem \ref{summary}, properties of direct product of digraphs and a construction of digraphs of larger girth from digraphs of smaller girth.   

\begin{theorem}\label{ch1}
Let $D$ be a digraph satisfying $g(D) \ge \delta^+ (D)$ and not containing any even cycle of length less than $2g(D)$, Then $D$ satisfies the Caccetta--H\"{a}ggkvist conjecture.  

\end{theorem}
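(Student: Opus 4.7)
The plan is to reduce Caccetta--H\"aggkvist for $D$ to Shen's bound (Theorem \ref{summary}) applied to a larger digraph obtained by taking a direct product with $C_2$. Writing $n = |V(D)|$, $k = \delta^+(D)$ and $g = g(D)$, I first note that the hypothesis forces $g$ to be odd; otherwise the shortest cycle of $D$ would already be an even cycle of length $g < 2g$, violating the assumption. I then set $D' = D \times C_2$, so that $|V(D')| = 2n$ and, since every vertex of $C_2$ has out-degree one, $\delta^+(D') = k$.

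The heart of the argument is the claim $g(D') \ge 2g$. Any cycle of length $t$ in $D'$ projects to closed walks of length $t$ in both factors. The factor $C_2$ admits only even-length closed walks, so $t$ must be even. The projection to $D$, by Lemma \ref{decomp}, decomposes into (not necessarily distinct) cycles whose lengths sum to $t$; each such cycle has length $\ge g$ if odd and $\ge 2g$ if even (by hypothesis). An even total $t$ cannot come from a single odd cycle, so either one of the decomposed cycles is even (contributing at least $2g$) or there are at least two odd cycles (contributing at least $2g$ together). In either case $t \ge 2g$.

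To finish, I argue by contradiction. Suppose $g > \lceil n/k \rceil$. Applying Theorem \ref{summary} to $D'$ yields $g(D') \le \max\{\lceil 2n/k \rceil,\, 2k-2\}$. Since $g \ge k$ by hypothesis, the lower bound $g(D') \ge 2g \ge 2k$ rules out the branch $2k-2$, so $g(D') \le \lceil 2n/k \rceil \le 2\lceil n/k \rceil$. Combined with $g(D') \ge 2g$ this gives $g \le \lceil n/k \rceil$, contradicting the assumption. Hence $g \le \lceil n/k \rceil$, which is exactly Caccetta--H\"aggkvist for $D$.

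The main obstacle is verifying the girth-doubling step $g(D') \ge 2g$, and it is precisely here that both hypotheses of the theorem are used: the odd girth prevents a short even closed walk in $D$ from arising from a single cycle of length $g$, while the absence of even cycles shorter than $2g$ blocks the other obvious way for such a closed walk to be short. The final arithmetic is routine, the only point to check being that $\lceil 2n/k \rceil \le 2\lceil n/k \rceil$, and that the hypothesis $g \ge k$ is exactly what is needed to defeat the second branch $2k-2$ of Shen's bound.
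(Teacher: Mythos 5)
Your proof is correct and its overall architecture is the same as the paper's: form the direct product with $C_2$, show the girth doubles to $2g$, and feed this into Shen's bound (Theorem \ref{summary}), using $2g \ge 2k > 2k-2$ to eliminate the second branch and $\lceil 2n/k\rceil \le 2\lceil n/k\rceil$ to finish. Where you genuinely diverge is in the verification of $g(C_2\times D)\ge 2g$. The paper routes this through linear algebra: $M_{C_2\times D}=M_{C_2}\otimes M_D$ (Lemma \ref{product}), the mixed-product rule for Kronecker products (Lemma \ref{mixed}), and the diagonal-entry count of closed walks (Lemma \ref{adj}), concluding that the product has a closed walk of length $\ell$ exactly when both factors do. You instead observe directly that a cycle of length $t$ in the product projects to closed walks of length $t$ in each coordinate, which is more elementary and dispenses with the matrix machinery entirely. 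Your subsequent case analysis via Lemma \ref{decomp} --- an even total length must come either from an even cycle (length $\ge 2g$ by hypothesis) or from at least two odd cycles (each of length $\ge g$) --- is in fact more explicit than the paper's bare assertion that ``the closed walk of smallest even length in $D$ has length $2g$,'' and your derivation that the hypotheses force $g$ to be odd is likewise left unjustified in the paper. The trade-off is that the paper's matrix formulation transfers mechanically to $C_p\times D$ for the generalization in Corollary \ref{ch2}, whereas your projection argument, while self-contained here, would need to be restated for that case.
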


\begin{proof}
Let $\left|V(D)\right|=n$, $\delta^+ (D) = k$ and $g(D)=g\ge k$. Let $D^\times = C_2 \times D$. Clearly, $V(D^\times ) = 2n$ and $\delta^+ (D^\times ) = k$ and $g$ is odd. We show that $g(D^\times ) = 2g$. By Lemma \ref{cycles}, $g(D^\times ) \le \lcm(2, g) = 2g$. We prove that $D^\times$ does not contain a cycle of length less than $2g$. By Lemma \ref{product}, 
\[M_{D^\times } = M_{C_2 \times D} = M_{C_2} \otimes M_{D} 
\] 
and using Lemma \ref{mixed}, 
\[M^\ell_{D^\times } = (M_{C_2} \otimes M_{D})^\ell = M^\ell_{C_2} \otimes M^\ell_{D},
\]
for any positive integer $\ell$. Now Lemma \ref{adj} implies that if $\ell$ is a positive integer for which $D^\times$ contains a closed walk of length $\ell$, then for some $i=1,\ldots, 2n$, $p=1, 2$ and $q=1,\ldots, n$, we have $(M^\ell_{D^\times })_{ii} = (M^\ell_{C_2})_{pp} (M^\ell_{D})_{qq}>0$. This occurs if and only if both $C_2$ and $D$ contain a closed walk of length $\ell$. Now by Lemma \ref{decomp}, $C_2$ contains closed walks of every even length and no closed walks of odd length, whereas the closed walk of smallest even length in $D$ has length $2g$. Thus $\ell\ge 2g$, which gives $g(D^\times ) \ge 2g$. Thus $g(D^\times ) = 2g$, as claimed. 

Since $g(D^\times ) = 2g\ge 2k = 2\delta^+ (D^\times )$, by Theorem \ref{summary},   
\[2\left\lceil\frac{n}{k}\right\rceil \ge \left\lceil\frac{2n}{k}\right\rceil = \left\lceil\frac{\left|V(D^\times )\right|}{\delta^+ (D^\times )}\right\rceil \ge g(D^\times ) = 2g.  
\]
This gives   
\[\left\lceil\frac{n}{k}\right\rceil \ge g,  
\]
completing the proof.
\end{proof}

One can generalize the technique used in the proof of Theorem \ref{ch1} to obtain the following result. 

\begin{corollary}\label{ch2}
Let $p> 2$ be a positive integer and $D$ be a digraph with $g(D)\ge 2\delta^+ (D)/p$ not containing closed walks of lengths $p, 2p, \ldots, (g(D)-1)p$. Then $D$ satisfies the Caccetta--H\"{a}ggkvist conjecture. 
\end{corollary}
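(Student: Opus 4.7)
The plan is to imitate the proof of Theorem \ref{ch1} but replace the cycle $C_2$ with $C_p$. Specifically, set $D^\times = C_p \times D$, so that $|V(D^\times)| = pn$ and, since $C_p$ is $1$-regular, $\delta^+(D^\times) = p \cdot \delta^+(C_p) \cdot \ldots$ well, more precisely, the out-degree of a vertex $(u,v)$ in a direct product is $d^+(u)\,d^+(v)$, so $\delta^+(D^\times) = 1 \cdot k = k$. The whole argument hinges on computing $g(D^\times)$ exactly.

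For the girth, I would apply exactly the same Kronecker product computation as in Theorem \ref{ch1}: by Lemmas \ref{product} and \ref{mixed}, $M^\ell_{D^\times} = M^\ell_{C_p} \otimes M^\ell_D$, so by Lemma \ref{adj}, $D^\times$ has a closed walk of length $\ell$ if and only if both $C_p$ and $D$ do. By Lemma \ref{decomp} applied to $C_p$, this forces $p \mid \ell$, so $\ell = jp$ for some $j \ge 1$. The hypothesis rules out $j = 1, \ldots, g-1$, while $D$ certainly admits a closed walk of length $gp$ (traverse any $g$-cycle of $D$ exactly $p$ times). Hence the smallest valid $\ell$ is $gp$, giving $g(D^\times) = gp$.

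With the girth in hand, I would apply Theorem \ref{summary} to $D^\times$:
\[
gp \;=\; g(D^\times) \;\le\; \max\!\left\{\left\lceil \tfrac{pn}{k} \right\rceil,\; 2k - 2\right\}.
\]
The hypothesis $g \ge 2k/p$ gives $gp \ge 2k > 2k - 2$, so the second term cannot dominate and we obtain $gp \le \lceil pn/k \rceil \le p \lceil n/k \rceil$. Dividing by $p$ yields $g \le \lceil n/k \rceil$, which is exactly the Caccetta--H\"aggkvist bound for $D$.

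The main subtlety, and the only place the full strength of the hypothesis is used, is in establishing $g(D^\times) = gp$; note that this implicitly requires $\gcd(p,g) = 1$, since otherwise $\lcm(p,g) = (g/\gcd(p,g))\,p$ would be a length in $\{p, 2p, \ldots, (g-1)p\}$ for which $D$ possesses a closed walk (namely, a $g$-cycle traversed $\lcm(p,g)/g$ times), contradicting the assumption. Once the girth computation goes through, the remainder of the proof is a routine verification mirroring the final calculation of Theorem \ref{ch1}.
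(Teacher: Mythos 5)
Your proof is correct and takes essentially the same route as the paper, whose own argument for this corollary is only a sketch ("by suitably generalizing the arguments in the proof of Theorem \ref{ch1}, we see that $g(C_p\times D)=pg$"); you carry out that generalization explicitly and handle the $\max\{\cdot,\cdot\}$ in Theorem \ref{summary} correctly. Your added observation that the hypothesis forces $\gcd(p,g)=1$ --- since otherwise the $g$-cycle traversed $p/\gcd(p,g)$ times is a forbidden closed walk of length $(g/\gcd(p,g))\,p$, and the girth of $C_p\times D$ would drop to $\lcm(p,g)<pg$ --- is a genuine detail the paper glosses over, and it is exactly right.
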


\begin{proof}
Let $g$, $k$ and $n$ be as in the proof of Theorem \ref{ch1}. Consider $D^\times_p = C_p \times D$ and note that $V(D^\times_p ) = pn$ and $\delta^+ (D^\times_p ) = k$. By suitably generalizing the arguments in the proof of Theorem \ref{ch1}, we see that $g(D^\times_p ) = pg$. 

Since $g(D^\times_p ) = pg\ge 2k = 2\delta^+ (D^\times_p )$, by Theorem \ref{summary},   
\[p\left\lceil\frac{n}{k}\right\rceil \ge \left\lceil\frac{pn}{k}\right\rceil = \left\lceil\frac{\left|V(D^\times )\right|}{\delta^+ (D^\times )}\right\rceil \ge g(D^\times ) = pg,   
\]
yielding   
\[\left\lceil\frac{n}{k}\right\rceil \ge g,   
\]
as desired. 
\end{proof}

The key difference between Theorem \ref{ch1} and Corollary \ref{ch2} is that for the former it suffices to consider digraphs without short even cycles, whereas for the latter one has to rule out the existence of closed walks of lengths that are multiples of $p$. On the other hand, the conclusion of Theorem \ref{ch1} is valid only if the girth of the digraph is at least as large as its minimum out-degree, while Corollary \ref{ch2} is applicable to digraphs of relatively smaller girths. 

The next result is an immediate consequence of Theorem \ref{ch1} and Corollary \ref{ch2}. 

\begin{corollary}
Conjecture \ref{bcw} holds for all $k$-regular digraphs having (i) girth $g\ge k$ that do not contain any even cycle of length less than $2g$ and (ii) girth $g\ge 2k/p$ that do not contain closed walks of lengths $p, 2p, \ldots, (g-1)p$, for some positive integer $p> 2$. 
\end{corollary}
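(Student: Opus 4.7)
The plan is to observe that this corollary follows directly by plugging a $k$-regular digraph into Theorem \ref{ch1} and Corollary \ref{ch2}, using the standard fact that the Behzad--Chartrand--Wall conjecture is the restriction of the Caccetta--H\"{a}ggkvist conjecture to $k$-regular digraphs. Concretely, if $D$ is $k$-regular then $\delta^+(D) = k$, so Conjecture \ref{CH conjecture} applied to $D$ gives a cycle of length at most $\lceil n/k\rceil$, and since every cycle has length at least $g = g(D)$, we obtain $g \le \lceil n/k\rceil$, which is exactly Conjecture \ref{bcw}.

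First, I would split into two cases according to the two hypotheses. For case (i), I would note that the assumption $g \ge k$ becomes $g(D) \ge \delta^+(D)$ for a $k$-regular $D$, and the assumption on even cycles is precisely the second hypothesis of Theorem \ref{ch1}; hence Theorem \ref{ch1} applies to $D$ and the Caccetta--H\"{a}ggkvist conjecture holds for $D$. For case (ii), the assumption $g \ge 2k/p$ becomes $g(D) \ge 2\delta^+(D)/p$, and the absence of closed walks of lengths $p, 2p, \ldots, (g-1)p$ is exactly the second hypothesis of Corollary \ref{ch2}; hence Corollary \ref{ch2} applies and the Caccetta--H\"{a}ggkvist conjecture holds for $D$.

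Finally, in both cases I would conclude by converting the Caccetta--H\"{a}ggkvist statement into Behzad--Chartrand--Wall: the existence of a cycle of length at most $\lceil n/k\rceil$ forces $g(D) \le \lceil n/k \rceil$. Since there is essentially no calculation to perform, there is no real obstacle here; the only thing to be careful about is to explicitly match the $k$-regularity hypothesis with the minimum out-degree condition and to state clearly that $k$-regularity ensures $\delta^+(D) = k$ so that the hypotheses of Theorem \ref{ch1} and Corollary \ref{ch2} are verified verbatim.
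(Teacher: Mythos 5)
Your proposal is correct and matches the paper's intent exactly: the paper offers no written proof, declaring the corollary an immediate consequence of Theorem \ref{ch1} and Corollary \ref{ch2}, and your argument (that $k$-regularity gives $\delta^+(D)=k$, so the hypotheses of those results hold verbatim, and that a cycle of length at most $\lceil n/k\rceil$ forces $g\le\lceil n/k\rceil$) is precisely the routine verification being left to the reader.
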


\section*{Acknowledgments} The author is supported by a Vanier Canada Graduate Scholarship (NSERC) and an Alberta Innovates Technology Futures (AITF) award.


\end{document}